\newtheorem{theorem}{Theorem}[section]
\newtheorem{proposition}[theorem]{Proposition}
\newtheorem{lemma}[theorem]{Lemma}
\newtheorem{corollary}[theorem]{Corollary}
\theoremstyle{definition}
\theoremstyle{remark}
\newtheorem{remark}[theorem]{Remark}
\numberwithin{equation}{section}
\begin{document}

\title{Representations by $x_1^2+2x_2^2+x_3^2+x_4^2+x_1x_3+x_1x_4+x_2x_4$}

\author{Ick Sun Eum}
\address{Department of Mathematical Sciences, KAIST}
\curraddr{Daejeon 373-1, Korea} \email{zandc@kaist.ac.kr}
\thanks{}

\author{Dong Hwa Shin}
\address{Department of Mathematical Sciences, KAIST}
\curraddr{Daejeon 373-1, Korea} \email{shakur01@kaist.ac.kr}
\thanks{}

\author{Dong Sung Yoon}
\address{Department of Mathematical Sciences, KAIST}
\curraddr{Daejeon 373-1, Korea} \email{yds1850@kaist.ac.kr}
\thanks{}

\subjclass[2010]{Primary 11E25; Secondary 11F11,11F25, 11M36}

\keywords{Eisenstein series, Hecke operators, modular forms, representations by quadratic forms.
\newline This research was partially supported by Basic Science Research
Program through the NRF of Korea funded by MEST (2010-0001654). The
second named author is partially supported by TJ Park Postdoctoral
Fellowship.}

\begin{abstract}
Let $r_Q(n)$ be the representation number of a nonnegative integer
$n$ by the quaternary quadratic form
$Q=x_1^2+2x_2^2+x_3^2+x_4^2+x_1x_3+x_1x_4+x_2x_4$. We first prove
the identity $r_Q(p^2n)=r_Q(p^2)r_Q(n)/r_Q(1)$ for any prime $p$
different from $13$ and any positive integer $n$ prime to $p$, which
was conjectured in \cite{E-K-S}. And, we explicitly determine a
concise formula for the number $r_Q(n^2)$ as well for any integer
$n$.
\end{abstract}

\maketitle

\section{Introduction}

Let $r$ be a positive integer and
\begin{equation*}
Q(x_1,\cdots,x_r)=\sum_{i<j}a_{ij}x_ix_j+\frac{1}{2}\sum_{i}
a_{ii}x_i^2
\end{equation*}
be the quadratic form associated with an $r\times r$ integral
positive definite symmetric matrix $(a_{ij})$ with even diagonal
entries. It is one of the important problems in number theory to
find the number of solutions of the equation
\begin{equation*}
Q(\mathbf{x})=n\quad(\mathbf{x}\in\mathbb{Z}^r)
\end{equation*}
for a given nonnegative integer $n$. In the case of $r=2$ it was
well studied by Fermat, Lagrange, Gauss and Dirichlet, and general
cases were considered systematically by Minkowski, Hasse and Siegel.
Only in very special cases does exist a satisfactory formula for the
representation number
\begin{equation*}
r_Q(n)=\#\{\mathbf{x}\in\mathbb{Z}^r~;~Q(\mathbf{x})=n\}.
\end{equation*}
For instance, when $Q=x_1^2+x_2^2+x_3^2+x_4^2+x_5^2+x_6^2$, Jacobi
(\cite[pp.159--170]{Jacobi}) gave the formula
\begin{equation*}
r_Q(n)=-4\sum_{d>0,d|n}\chi_{-4}(d)d^2+16\sum_{d>0,d|n}\chi_{-4}(n/d)d^2,
\end{equation*}
where $\chi_{-4}(d)=(\frac{-4}{d})$. This number $r_Q(n)$ is exactly
the Fourier coefficient of the corresponding Eisenstein series
(\cite[$\S$11.3]{Iwaniec}). One can also refer to
\cite[$\S$31]{Fine} for some concrete examples related to
hypergeometric series.
\par
Now, let us consider the quadratic form
\begin{equation*}
Q(x_1,x_2,x_3,x_4)=x_1^2+2x_2^2+x_3^2+x_4^2+x_1x_3+x_1x_4+x_2x_4\quad\textrm{associated
with the
matrix}~\left(\begin{smallmatrix}2&0&1&1\\0&4&0&1\\1&0&2&0\\1&1&0&2\end{smallmatrix}\right).
\end{equation*}
If $\Theta_Q(\tau)=\sum_{n=0}^\infty r_Q(n)q^n$ ($q=e^{2\pi i\tau}$)
is the theta function associated with $Q$, then $\Theta_Q(\tau)$
lies in the space $\mathcal{M}_{13}(13,\chi_{13})$ of dimension $2$
which consists of all modular forms for $\Gamma_0(13)$ associated
with the character $\chi_{13}(\cdot)=(\tfrac{13}{\cdot})$
(\cite[Corollary 4.9.5]{Miyake}). Eum et al (\cite[Example
3.4]{E-K-S}) recently provided a basis of the space
$\mathcal{M}_{2}(\Gamma_1(13))$ of dimension $13$, which consists of
modular forms for $\Gamma_1(13)$, in terms of Klein forms and
expressed $\Theta_Q(\tau)$ as a linear combination of the basis
elements. On the other hand, they happened to find in the process an
interesting identity
\begin{equation}\label{interesting}
r_Q(p^2n)=\frac{r_Q(p^2)r_Q(n)}{r_Q(1)}\quad\textrm{for any prime
$p$ other than $13$ and any positive integer $n$ prime to $p$}.
\end{equation}
But, they could give only a conditional proof by applying Hecke
operators on $\Theta_Q(\tau)$ as follows: if $p$ is a prime
satisfying the relation
\begin{equation}\label{conditional}
r_Q(p^2)=r_Q(1)(1+\chi_{13}(p)p+p^2),
\end{equation}
then (\ref{interesting}) is true (\cite[Proposition 4.3]{E-K-S}).
For example, each prime $p$ ($\neq13$) less than or equal to $347$
satisfies (\ref{conditional}). And so, the proof of
(\ref{interesting}) has remained open.
\par
In this paper, we shall completely prove the conjecture
(\ref{interesting}) (Theorem \ref{main}) by using the fact that the
space $\mathcal{M}_2(13,\chi_{13})$ is generated by two Eisenstein
series (Corollary \ref{two}). Also, we shall obtain a general
formula for $r_Q(n)$ that looks like Jacobi's formula, from which we
get a concise formula for $r_Q(n^2)$ for any integer $n$ (Remark
\ref{formula} and Table \ref{table}).

\section{Modular forms and Hecke operators}

Let $k$ be an integer. For each
$\gamma=\begin{pmatrix}a&b\\c&d\end{pmatrix}\in\mathrm{SL}_2(\mathbb{Z})$
we define the \textit{weight $k$ slash operator} $\cdot|[\gamma]_k$
on a function $f(\tau)$ on $\mathbb{H}$ (= the complex upper
half-plane) by
\begin{equation*}
f(\tau)|[\alpha]_k:=(c\tau+d)^{-k}(f(\tau)\circ\gamma),
\end{equation*}
where $\gamma$ acts on $\mathbb{H}$ as a fractional linear
transformation $\tau\mapsto(a\tau+b)/(c\tau+d)$. Let $\Gamma$ be one
of the following congruence subgroups
\begin{eqnarray*}
\Gamma_1(N)&:=&\bigg\{\alpha\in\mathrm{SL}_2(\mathbb{Z})~;~
\alpha\equiv\left(\begin{matrix}1&*\\0&1\end{matrix}\right)\pmod{N}\bigg\},\\
\Gamma_0(N)&:=&\bigg\{\alpha\in\mathrm{SL}_2(\mathbb{Z})~;~
\alpha\equiv\left(\begin{matrix}*&*\\0&*\end{matrix}\right)\pmod{N}\bigg\}
\end{eqnarray*}
for a positive integer $N$. A holomorphic function $f(\tau)$ on
$\mathbb{H}$ is called a \textit{modular form for $\Gamma$ of weight
$k$} if
\begin{itemize}
\item[(i)] $f(\tau)|[\gamma]_k=f(\tau)$ for all $\gamma\in\Gamma$,
\item[(ii)] $f(\tau)$ is holomorphic at every cusp ($\in\mathbb{Q}\cup\{\infty\}$)
(\cite[pp.125--126]{Koblitz}). In particular, since
$\left(\begin{smallmatrix}1&1\\0&1\end{smallmatrix}\right)\in\Gamma$
and
$f(\tau)\circ\left(\begin{smallmatrix}1&1\\0&1\end{smallmatrix}\right)=f(\tau+1)$
by (i), $f(\tau)$ has a Laurent series expansion with respect to
\begin{equation*}
q:=e^{2\pi i\tau}
\end{equation*}
of the form
\begin{equation*}
f(\tau)=\sum_{n=0}^\infty a(n)q^n\quad(a(n)\in\mathbb{C}),
\end{equation*}
which is called the \textit{Fourier expansion} of $f(\tau)$ (at the
cusp $\infty$).
\end{itemize}
Moreover, if a modular form vanishes at every cusp, it is called a
\textit{cusp form}. We denote the space of all modular forms
(respectively, cusp forms) for $\Gamma$ of weight $k$ by
$\mathcal{M}_k(\Gamma)$ (respectively, $\mathcal{S}_k(\Gamma)$).
\par
For a given Dirichlet character $\chi$ modulo $N$
 we define a character of $\Gamma_0(N)$ (\cite[pp.79--80]{Miyake}),
also denoted by $\chi$, to be
\begin{equation*}
\chi(\gamma):=\chi(d)\quad\textrm{for}~\gamma=\begin{pmatrix}a&b\\c&d\end{pmatrix}\in\Gamma_0(N).
\end{equation*}
Let
\begin{eqnarray*}
\mathcal{M}_k(N,\chi)&:=&\{f(\tau)\in \mathcal{M}_k(\Gamma_1(N))~;~
f(\tau)|[\gamma]_k=\chi(\gamma)f(\tau)\quad\textrm{for
all}~\gamma\in\Gamma_0(N)\},\\
\mathcal{S}_k(N,\chi)&:=&\mathcal{S}_k(\Gamma_1(N))\cap
\mathcal{M}_k(N,\chi),
\end{eqnarray*}
which are subspaces of $\mathcal{M}_k(\Gamma_1(N))$ and $\mathcal{S}_k(\Gamma_1(N))$,
respectively.
Then we have the decomposition
\begin{equation*}
\mathcal{M}_k(\Gamma_1(N))=\bigoplus_\chi \mathcal{M}_k(N,\chi),
\end{equation*}
where $\chi$ runs over all Dirichlet characters modulo $N$
\cite[Chapter III Proposition 28]{Koblitz}. If $\chi(-1)\neq(-1)^k$,
then the space $\mathcal{M}_k(N,\chi)$ is known to be $\{0\}$
(\cite[p.138]{Koblitz}).

\begin{proposition}\label{weight0}
Let $N$ be a positive integer.
\begin{itemize}
\item[(i)] $\dim_\mathbb{C}\mathcal{M}_k(\Gamma_1(N))=0$ for any negative integer $k$.
\item[(ii)] $\dim_\mathbb{C}\mathcal{M}_0(\Gamma_1(N))=1$, and hence
$\dim_\mathbb{C}\mathcal{M}_0(N,\chi)=0$ if $\chi$ is nontrivial.
\end{itemize}
\end{proposition}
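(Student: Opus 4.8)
The plan is to deduce both statements from the valence (or $k/12$) formula for modular forms on a congruence subgroup, together with the elementary fact that a holomorphic function on a compact Riemann surface is constant. I would set $\mu=[\mathrm{PSL}_2(\mathbb{Z}):\overline{\Gamma_1(N)}]$ and recall that for a nonzero $f\in\mathcal{M}_k(\Gamma_1(N))$ the valence formula reads
\begin{equation*}
\sum_{P}\mathrm{ord}_P(f)=\frac{k\mu}{12},
\end{equation*}
where the sum runs over representatives for the orbits of $\Gamma_1(N)$ on $\mathbb{H}$ together with the cusps, each term weighted by the order of the corresponding stabilizer (the cusps contributing via the $q$-expansion). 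The feature I want to exploit is that, since $f$ is \emph{holomorphic} on $\mathbb{H}$ and holomorphic at every cusp, every order $\mathrm{ord}_P(f)$ on the left is nonnegative.

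For part (i) I would argue by contradiction: if $k<0$ and some nonzero $f\in\mathcal{M}_k(\Gamma_1(N))$ existed, then the left-hand side of the valence formula would be $\geq0$ while the right-hand side equals $k\mu/12<0$, which is impossible. Hence $\mathcal{M}_k(\Gamma_1(N))=\{0\}$ and its dimension is $0$.

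For part (ii) I would first treat $k=0$. A weight-zero modular form for $\Gamma_1(N)$ is a $\Gamma_1(N)$-invariant holomorphic function on $\mathbb{H}$ that extends holomorphically to each cusp, that is, a holomorphic function on the compact modular curve $X_1(N)=\overline{\Gamma_1(N)\backslash\mathbb{H}}$. Since $X_1(N)$ is a compact connected Riemann surface, the maximum modulus principle forces such a function to be constant, and conversely every constant is such a form; therefore $\dim_\mathbb{C}\mathcal{M}_0(\Gamma_1(N))=1$. (Alternatively, the valence formula with $k=0$ shows $f$ has no zeros, and one still concludes via compactness.) To pass to the character statement I would invoke the decomposition $\mathcal{M}_0(\Gamma_1(N))=\bigoplus_\chi\mathcal{M}_0(N,\chi)$ recorded above: the constants lie in the summand for the trivial character and already fill the one-dimensional space $\mathcal{M}_0(\Gamma_1(N))$, so every summand $\mathcal{M}_0(N,\chi)$ with $\chi$ nontrivial is forced to be $\{0\}$. (Directly, a constant $c$ would satisfy $c=c|[\gamma]_0=\chi(\gamma)c$ for all $\gamma\in\Gamma_0(N)$, whence $c=0$ as soon as $\chi(\gamma)\neq1$ for some $\gamma$.)

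Everything here is routine; the only point needing care is the correct bookkeeping in the valence formula--- in particular that holomorphy at the cusps really does make every local order nonnegative--- since that nonnegativity is exactly what drives the sign comparison in (i) and the no-zeros conclusion in (ii). I do not expect any genuine obstacle beyond getting that accounting right.
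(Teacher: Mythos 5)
Your argument is correct, but note that the paper does not actually prove this proposition: its ``proof'' is a one-line citation to Miyake's Theorems 2.5.2 and 2.5.3, which are general dimension formulas for Fuchsian groups obtained from Riemann--Roch. Your route---the valence formula for (i) and compactness of $X_1(N)$ plus the maximum modulus principle for (ii), followed by the character decomposition---is the standard self-contained way to get exactly these two facts without invoking the full dimension formulas, and it is in the same geometric spirit as the cited source. Two small points deserve a word if you write this up carefully. First, the valence formula is usually stated for even weight (it comes from computing the degree of the divisor of a meromorphic differential); for odd $k$ you should pass to $f^{2}$ or $f^{12}$, which lies in $\mathcal{M}_{2k}(\Gamma_1(N))$ or $\mathcal{M}_{12k}(\Gamma_1(N))$ and still has negative weight and nonnegative local orders, so the sign contradiction survives. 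Second, in (ii) the descent of a $\Gamma_1(N)$-invariant holomorphic function to a holomorphic function on $X_1(N)$ needs the (easy) check at elliptic points---an invariant function near such a point is a holomorphic function of $z^{e}$, which is the local uniformizer on the quotient---and at the cusps, where holomorphy of the $q$-expansion gives the extension. Neither point is a gap, just bookkeeping you have already flagged as the part needing care. Your parenthetical direct argument for the vanishing of $\mathcal{M}_0(N,\chi)$ for nontrivial $\chi$ (a constant $c$ satisfies $c=\chi(\gamma)c$) is the cleanest way to finish and matches how the paper uses the decomposition $\mathcal{M}_k(\Gamma_1(N))=\bigoplus_\chi\mathcal{M}_k(N,\chi)$ elsewhere.
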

\begin{proof}
See \cite[Theorems 2.5.2 and 2.5.3]{Miyake}.
\end{proof}

Using the Riemann-Roch Theorem, Cohen and Oesterl\'{e} (\cite{C-O})
explicitly computed the following dimension formulas.

\begin{proposition}\label{Cohen}
Let $k$ be an integer and $\chi$ be a Dirichlet character modulo $N$
for which $\chi(-1)=(-1)^k$. For each prime $p$ dividing $N$, let
$r_p$ \textup{(}respectively, $s_p$\textup{)} denote the power of
$p$ dividing $N$ \textup{(}respectively, the conductor of
$\chi$\textup{)}. Define
\begin{equation*}
\lambda(r_p,s_p,p):=\left\{
\begin{array}{ll}
p^{r'}+p^{r'-1} & \textrm{if}~2s_p\leq r_p=2r'\\
2p^{r'} & \textrm{if}~2s_p\leq r_p=2r'+1\\
2p^{r_p-s_p} & \textrm{if}~2s_p>r_p,
\end{array}\right.
\end{equation*}
and
\begin{equation*}
\nu_k:=\left\{\begin{array}{rl}0 & \textrm{if}~$k$~\textrm{is odd}\\
-1/4 & \textrm{if}~
k\equiv2\pmod{4}\\
1/4 & \textrm{if}~ k\equiv0\pmod{4},\end{array}\right.\quad
\mu_k:=\left\{\begin{array}{rl}0 & \textrm{if}~
k\equiv1\pmod{3}\\
-1/3 & \textrm{if}~
k\equiv2\pmod{3}\\
1/3 & \textrm{if}~k\equiv0\pmod{3}.\end{array}\right.
\end{equation*}
Then we have
\begin{equation*}
\dim_{\mathbb{C}}\mathcal{M}_{k}(N,\chi)-
\dim_{\mathbb{C}}\mathcal{S}_{2-k}(N,\chi)
=\frac{(k-1)N}{12}\prod_{p|N}(1+p^{-1})
+\frac{1}{2}\prod_{p|N}\lambda(r_p,s_p,p)-\nu_{2-k}\alpha(\chi)-\mu_{2-k}\beta(\chi),
\end{equation*}
where
\begin{equation*}
\alpha(\chi):=\sum_{\begin{smallmatrix}x\pmod{N}\\x^2+1\equiv0\pmod{N}\end{smallmatrix}}
\chi(x)\quad\textrm{and}\quad
\beta(\chi):=\sum_{\begin{smallmatrix}x\pmod{N}\\x^2+x+1\equiv0\pmod{N}\end{smallmatrix}}\chi(x).
\end{equation*}
\end{proposition}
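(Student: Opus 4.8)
The plan is to realize the two spaces of modular forms as cohomology groups of a single line bundle on the compactified modular curve and then to invoke the Riemann--Roch theorem, as the sentence preceding the statement already advertises. Concretely, writing $X$ for the modular curve associated with $\Gamma_0(N)$ (regarded as a stacky curve, or equivalently passing to $X_1(N)$ and extracting the $\chi$-isotypic part under the action of $\Gamma_0(N)/\Gamma_1(N)\cong(\mathbb{Z}/N\mathbb{Z})^\times$), one has the line bundle $\omega$ of weight-one forms, and $\mathcal{M}_k(N,\chi)=H^0(X,\mathcal{L})$ for a suitable twist $\mathcal{L}$ of $\omega^{\otimes k}$ by the flat bundle $F_\chi$ attached to $\chi$. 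The crucial geometric input is the Kodaira--Spencer isomorphism $\omega^{\otimes2}\cong\Omega^1_X(\mathrm{cusps})$, identifying weight-two forms with differentials having at worst simple poles at the cusps.

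First I would use this isomorphism together with Serre duality to identify the complementary space: since $\Omega^1_X\otimes\mathcal{L}^{-1}\cong\omega^{\otimes(2-k)}(-\mathrm{cusps})\otimes F_{\bar\chi}$, its global sections are exactly the weight-$(2-k)$ \emph{cusp} forms with the conjugate character, and $\dim\mathcal{S}_{2-k}(N,\bar\chi)=\dim\mathcal{S}_{2-k}(N,\chi)$ because complex conjugation of Fourier coefficients interchanges the two spaces. Hence
\begin{equation*}
\dim_{\mathbb{C}}\mathcal{M}_k(N,\chi)-\dim_{\mathbb{C}}\mathcal{S}_{2-k}(N,\chi)=h^0(X,\mathcal{L})-h^1(X,\mathcal{L})=\deg\mathcal{L}+1-g,
\end{equation*}
so the entire problem reduces to computing the genus $g$ of $X$ and the degree of $\mathcal{L}$, with due care at the orbifold points.

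Next I would carry out the degree computation by splitting the contributions into three groups of points. The generic ``bulk'' contribution produces the main term $\tfrac{(k-1)N}{12}\prod_{p\mid N}(1+p^{-1})$, which is essentially $(k-1)$ times one twelfth of the index $[\mathrm{PSL}_2(\mathbb{Z}):\overline{\Gamma_0(N)}]=N\prod_{p\mid N}(1+p^{-1})$, absorbed together with the $1-g$ term of Riemann--Roch. At the \emph{elliptic points} of order $2$ and $3$ the divisor $D$ representing $\mathcal{L}$ on the coarse curve has fractional coefficients, so passing to $\lfloor D\rfloor$ (whose sections compute $h^0$) introduces defect terms equal to the fractional parts; weighting each elliptic point by the value of $\chi$ at the corresponding fixed point and summing gives precisely $-\nu_{2-k}\alpha(\chi)$ from the order-$2$ points (counted by $x^2+1\equiv0\pmod N$) and $-\mu_{2-k}\beta(\chi)$ from the order-$3$ points (counted by $x^2+x+1\equiv0\pmod N$). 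At the \emph{cusps} the distinction between $\mathcal{M}_k$ (holomorphic) and $\mathcal{S}_{2-k}$ (vanishing) accounts for the remaining term $\tfrac{1}{2}\prod_{p\mid N}\lambda(r_p,s_p,p)$, where the three cases in the definition of $\lambda$ reflect how the conductor $p^{s_p}$ of $\chi$ compares with the local level $p^{r_p}$, that is, whether the nebentypus is trivial, mildly, or heavily ramified on the cusps lying above $p$.

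The main obstacle will be precisely this last, local, analysis with the character present. Determining the number of cusps, their widths, and the action of $\chi$ on each of them requires a careful orbit count for $\Gamma_0(N)\backslash\mathbb{P}^1(\mathbb{Q})$ refined by the local conductor at every prime, and it is exactly this bookkeeping that forces the trichotomy in $\lambda(r_p,s_p,p)$. By contrast the elliptic-point contributions, once one knows that order-$2$ (respectively order-$3$) fixed points are governed by $x^2+1\equiv0$ (respectively $x^2+x+1\equiv0$), reduce to a routine fractional-part computation that assembles into $\alpha(\chi)$ and $\beta(\chi)$. Multiplicativity over the primes dividing $N$ --- which lets the cusp and elliptic counts be written as products and sums of purely local quantities --- is the final ingredient that turns the global Riemann--Roch identity into the stated closed form.
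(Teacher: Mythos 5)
The paper does not actually prove this proposition: its ``proof'' is a citation of Cohen--Oesterl\'e and Ono, so there is no internal argument to compare yours against. Your outline does follow the strategy of the cited source (Riemann--Roch on the modular curve, Kodaira--Spencer plus Serre duality to identify $h^1$ with $\dim\mathcal{S}_{2-k}(N,\bar\chi)=\dim\mathcal{S}_{2-k}(N,\chi)$, and a local analysis at elliptic points and cusps), and the skeleton $h^0-h^1=\deg\mathcal{L}+1-g$ is the right one.

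However, as written this is a roadmap rather than a proof, and the gap sits exactly where the content of the proposition lies. Every term on the right-hand side other than the main term is asserted rather than derived: you say the elliptic-point defects ``give precisely'' $-\nu_{2-k}\alpha(\chi)-\mu_{2-k}\beta(\chi)$ without computing the fractional parts that produce the specific constants $\pm1/4$ and $\pm1/3$ as functions of $2-k$ modulo $4$ and $3$, and without justifying why the weight attached to an elliptic point is $\chi(x)$ for $x$ a root of $x^2+1$ or $x^2+x+1$ modulo $N$ (this requires tracing the $\chi$-isotypic decomposition of the fibre of $F_\chi$ at the stacky point, not merely counting points). More seriously, you explicitly defer the cusp computation --- the derivation of the trichotomy in $\lambda(r_p,s_p,p)$ from the comparison of the local conductor $p^{s_p}$ with $p^{r_p}$ --- calling it ``the main obstacle'' and ``bookkeeping.'' That bookkeeping (classifying the cusps of $X_0(N)$ by their widths, determining on which cusps the character is trivial on the stabilizer so that the cusp genuinely contributes to the $\chi$-eigenspace, and verifying that the count is multiplicative over $p\mid N$) is the entire nontrivial content of the formula; a proof that stops before it has not established the identity. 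To complete the argument you would need to carry out both local computations explicitly, or else do what the paper does and simply cite \cite{C-O} or \cite{Ono}.
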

\begin{proof}
See \cite[Th\'{e}or\`{e}m 1]{C-O} or \cite[Theorem 1.56]{Ono}.
\end{proof}

\begin{remark}\label{ab}
Suppose that $N$ is a prime. Since $r_N=1$ and $s_N=0$ or $1$, we
get $\lambda(r_N,s_N,N)=2$. Observe that there are at most two
$x\pmod{N}$ which satisfy $x^2+1\equiv0\pmod{N}$. Furthermore, since
$|\chi(x)|=1$, we deduce $|\alpha(\chi)|\leq2$. In a similar way, we
have $|\beta(\chi)|\leq2$. Hence
\begin{equation*}
\dim_{\mathbb{C}}\mathcal{M}_{k}(N,\chi)-
\dim_{\mathbb{C}}\mathcal{S}_{2-k}(N,\chi)
\geq\frac{(k-1)N}{12}\cdot(1+N^{-1})+\frac{1}{2}\cdot2-\frac{1}{4}\cdot2-\frac{1}{3}\cdot{2}
=\frac{(k-1)(N+1)}{12}-\frac{1}{6}.
\end{equation*}
\end{remark}

For a nonzero integer $N$ with $N\equiv0$ or $1\pmod{4}$ we denote
by $\chi_N$ the Dirichlet character modulo $|N|$ defined by
\begin{equation*}
\chi_N(d):=\textrm{the Kronecker
symbol}~\bigg(\frac{N}{d}\bigg)\quad\textrm{for}~d\in(\mathbb{Z}/|N|\mathbb{Z})^\times.
\end{equation*}
Note that
\begin{equation}\label{def-1}
\bigg(\frac{N}{-1}\bigg):=\left\{\begin{array}{rl}1 & \textrm{if}~N>0\\
-1 & \textrm{if}~N<0.\end{array}\right.
\end{equation}
In particular, let $N$ be the discriminant of a quadratic field,
namely, for a square-free integer $m$ ($\neq1$)
\begin{equation*}
N=\left\{\begin{array}{rl} m & \textrm{if}~m\equiv1\pmod{4}\\
4m & \textrm{if}~m\not\equiv1\pmod{4}.\end{array}\right.
\end{equation*}
Then $\chi_N$ becomes a primitive Dirichlet character modulo $|N|$
(\cite[pp.82--84]{Miyake}).

\begin{corollary}\label{dim2}
Let $k$ \textup{(}$\geq2$\textup{)} be an integer and $N$ be a prime
such that $(-1)^kN$ is the discriminant of a quadratic field. Then,
$\dim_\mathbb{C}\mathcal{M}_k(N,\chi_{(-1)^kN})=2$ if and only if
$(k,N)\in\{(2,5),(2,13),(2,17),(3,3),(4,5),(5,3)\}$.
\end{corollary}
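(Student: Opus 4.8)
The plan is to combine the Cohen--Oesterl\'e dimension formula (Proposition~\ref{Cohen}) with the vanishing statements of Proposition~\ref{weight0}, using the crude lower bound of Remark~\ref{ab} to reduce the problem to finitely many pairs $(k,N)$. First I would assemble the input data. Because $N$ is prime and $(-1)^kN$ is a fundamental discriminant, $\chi:=\chi_{(-1)^kN}$ is a primitive character modulo $N$, so in the notation of Proposition~\ref{Cohen} we have $r_N=s_N=1$ and $\lambda(r_N,s_N,N)=2$, exactly as in Remark~\ref{ab}. Since $\chi_D(-1)=\mathrm{sign}(D)$ for a fundamental discriminant $D$, we have $\chi(-1)=(-1)^k$, the compatibility required for $\mathcal{M}_k(N,\chi)\neq\{0\}$; it also forces $N\equiv1\pmod4$ when $k$ is even and $N\equiv3\pmod4$ when $k$ is odd. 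I would then discard the cusp-form term: for $k\geq3$ we have $2-k<0$, so $\mathcal{S}_{2-k}(N,\chi)\subseteq\mathcal{M}_{2-k}(\Gamma_1(N))=\{0\}$ by Proposition~\ref{weight0}(i), while for $k=2$ a weight-$0$ cusp form is a constant vanishing at the cusps, hence $0$. Thus $\dim_\mathbb{C}\mathcal{S}_{2-k}(N,\chi)=0$ throughout, and Proposition~\ref{Cohen} reduces to
\begin{equation*}
\dim_\mathbb{C}\mathcal{M}_k(N,\chi)=\frac{(k-1)(N+1)}{12}+1-\nu_{2-k}\,\alpha(\chi)-\mu_{2-k}\,\beta(\chi).
\end{equation*}

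Next I would bound the candidate pairs. The estimate of Remark~\ref{ab} gives $\dim_\mathbb{C}\mathcal{M}_k(N,\chi)\geq\frac{(k-1)(N+1)}{12}-\frac16$, so if $(k-1)(N+1)\geq27$ the right-hand side exceeds $2$ and hence $\dim\geq3$. Therefore $\dim=2$ requires $(k-1)(N+1)\leq26$; combined with $k\geq2$, with $N$ prime, and with the parity constraint above, this leaves only the nine pairs
\begin{equation*}
(k,N)\in\{(2,5),(4,5),(2,13),(2,17),(3,3),(5,3),(3,7),(3,11),(7,3)\}.
\end{equation*}

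Finally I would evaluate the displayed identity on each of these nine pairs. For this I would determine $\alpha(\chi)$ and $\beta(\chi)$ by counting the roots modulo $N$ of $x^2+1$ and of $x^2+x+1$ (whose solvability is governed by $N\bmod4$ and $N\bmod3$) and evaluating the Kronecker symbol at those roots, and I would read $\nu_{2-k},\mu_{2-k}$ off their definitions. I expect the six pairs $(2,5),(4,5),(2,13),(2,17),(3,3),(5,3)$ to yield dimension $2$ and the three remaining pairs $(3,7),(3,11),(7,3)$ to yield dimension $3$. These last three are the main obstacle: they pass the soft bound $(k-1)(N+1)\leq26$ and so cannot be ruled out by any dimension estimate, and must instead be eliminated by the explicit computation of the character sums $\alpha(\chi)$ and $\beta(\chi)$ rather than by a purely numerical comparison.
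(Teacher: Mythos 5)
Your proposal is correct and follows essentially the same route as the paper: kill the cusp-form term via Proposition~\ref{weight0}, use the bound of Remark~\ref{ab} to reduce to the nine pairs with $(k-1)(N+1)\leq26$, and then evaluate the Cohen--Oesterl\'e formula on each to eliminate $(3,7)$, $(3,11)$, $(7,3)$. The only difference is that you spell out the final case-by-case verification (computing $\alpha(\chi)$ and $\beta(\chi)$ explicitly) where the paper simply asserts that it is easily checked.
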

\begin{proof}
Since $(-1)^kN$ is the discriminant of a quadratic field and $N$ is
a prime, $(-1)^kN\equiv1\pmod{4}$ and $\chi_{(-1)^kN}$ is a
primitive Dirichlet character modulo $N$. Suppose that
$\dim_\mathbb{C}\mathcal{M}_k(N,\chi_{(-1)^kN})=2$. We then see that
\begin{eqnarray*}
2&=&\dim_{\mathbb{C}}\mathcal{M}_{k}(N,\chi_{(-1)^kN})
-\dim_{\mathbb{C}}\mathcal{S}_{2-k}(N,\chi_{(-1)^kN}),
~\textrm{because
$\dim_\mathbb{C}\mathcal{S}_{2-k}(N,\chi_{(-1)^kN})=0$ by
Proposition
\ref{weight0},}\\
&\geq&\frac{(k-1)(N+1)}{12}-\frac{1}{6}\quad\textrm{by Remark
\ref{ab}}.
\end{eqnarray*}
It follows that $(k-1)(N+1)\leq26$, and so the possible pairs of
$(k,N)$ are
\begin{equation*}
(2,5),(2,13),(2,17),(3,3),(3,7),(3,11),(4,5),(5,3),(7,3).
\end{equation*}
Now, one can easily verify that
$\dim_\mathbb{C}\mathcal{M}_k(N,\chi_{(-1)^kN})=2$ except for
$(3,7),(3,11),(7,3)$ by Propositions \ref{weight0} and \ref{Cohen}.
\end{proof}

Let $\chi$ be a nontrivial primitive Dirichlet character modulo $N$.
The \textit{Dirichlet $L$-function} $L(s,\chi)$ on $s\in\mathbb{C}$
is defined by
\begin{equation*}
L(s,\chi):=\sum_{n=1}^\infty\frac{\chi(n)}{n^s},
\end{equation*}
where we set $\chi(n)=0$ if $\gcd(n,N)\neq1$. As is well-known, the
function converges for $\mathrm{Re}(s)>1$. Moreover, it extends to
an entire function and satisfies the following functional equation
\begin{equation*}
L(s,\chi)=L(1-s,\overline{\chi})\bigg(\frac{2\pi}{N}\bigg)^s\frac{S(\chi)}{\Gamma(s)}\bigg(\frac{e^{\pi
is/2}-\chi(-1)e^{-\pi is/2}}{e^{\pi is}-e^{-\pi is}}\bigg),
\end{equation*}
where
\begin{equation*}
S(\chi):=\sum_{a=1}^{N-1}\chi(a)e^{2\pi ia/N}\quad\textrm{and}\quad
\Gamma(s):=\int_0^\infty e^{-t}t^{s-1}dt
\end{equation*}
(\cite[Chapter XIV Theorem 2.2(ii)]{Lang}).

\begin{lemma}\label{L-function}
Let $k$ be a positive integer and $\chi$ be a nontrivial primitive
Dirichlet character modulo $N$.
\begin{itemize}
\item[(i)] $L(1-k,\chi)\neq0$ if and only if $\chi(-1)=(-1)^k$.
\item[(ii)] We have
\begin{equation*}
L(1-k,\chi)=-\frac{B_{k,\chi}}{k},
\end{equation*}
where $B_{k,\chi}$ is a \textit{generalized Bernoulli number}
defined by the following identity of infinite series
\begin{equation*}
\sum_{a=1}^{N-1}\chi(a)\frac{te^{at}}{e^{Nt}-1}=\sum_{k=0}^\infty
B_{k,\chi}\frac{t^k}{k!}.
\end{equation*}
\end{itemize}
\end{lemma}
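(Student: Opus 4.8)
The plan is to establish (ii) first and then derive (i) from it together with the functional equation for $L(s,\chi)$ recorded above. For (ii) I would pass through the Hurwitz zeta function $\zeta(s,x):=\sum_{n=0}^{\infty}(n+x)^{-s}$. Splitting the Dirichlet series by residues modulo $N$ gives, for $\mathrm{Re}(s)>1$,
\[
L(s,\chi)=N^{-s}\sum_{a=1}^{N}\chi(a)\,\zeta\!\left(s,\tfrac{a}{N}\right),
\]
and this identity persists under analytic continuation. Since $\zeta(s,x)$ continues to a meromorphic function on $\mathbb{C}$ with the classical special values $\zeta(1-k,x)=-B_k(x)/k$, where $B_k(x)$ denotes the $k$-th Bernoulli polynomial, substituting $s=1-k$ produces
\[
L(1-k,\chi)=-\frac{1}{k}\,N^{k-1}\sum_{a=1}^{N}\chi(a)B_k(a/N).
\]

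It then remains to identify the sum on the right with $B_{k,\chi}$, and I would do this by comparing generating functions. Replacing $t$ by $Nt$ in the defining identity $\frac{te^{xt}}{e^{t}-1}=\sum_{k\geq0}B_k(x)\frac{t^k}{k!}$, specializing $x=a/N$, multiplying by $\chi(a)/N$, and summing over $a$ turns the left-hand side into precisely the series $\sum_{a=1}^{N}\chi(a)\frac{te^{at}}{e^{Nt}-1}$ defining the $B_{k,\chi}$. Matching the coefficients of $t^k/k!$ yields $B_{k,\chi}=N^{k-1}\sum_{a=1}^{N}\chi(a)B_k(a/N)$, whence (ii) is immediate.

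For the vanishing half of (i) I would exploit a symmetry of the same generating function. Setting $F(t)=\sum_{a=1}^{N}\chi(a)\frac{te^{at}}{e^{Nt}-1}$ and replacing $t$ by $-t$, the reindexing $a\mapsto N-a$ (using $\chi(N)=0$ and $\chi(N-a)=\chi(-1)\chi(a)$) gives $F(-t)=\chi(-1)F(t)$; comparing coefficients yields $\bigl((-1)^k-\chi(-1)\bigr)B_{k,\chi}=0$, so $B_{k,\chi}=0$ whenever $\chi(-1)\neq(-1)^k$, and by (ii) this is exactly the implication $\chi(-1)\neq(-1)^k\Rightarrow L(1-k,\chi)=0$. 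For the converse I would substitute $s=1-k$ into the functional equation to write $L(1-k,\chi)=c\,L(k,\overline{\chi})$, where $c$ absorbs the nonzero Gauss sum $S(\chi)$ and the factor $(2\pi/N)^{1-k}$ together with $\lim_{s\to1-k}\bigl(e^{\pi is/2}-\chi(-1)e^{-\pi is/2}\bigr)/\bigl(\Gamma(s)(e^{\pi is}-e^{-\pi is})\bigr)$. Here $1/\Gamma(s)$ has a simple zero and $e^{\pi is}-e^{-\pi is}=2i\sin(\pi s)$ a simple zero at $s=1-k$, while a short computation shows the factor $e^{\pi is/2}-\chi(-1)e^{-\pi is/2}$ vanishes there precisely when $\chi(-1)\neq(-1)^k$; resolving the resulting indeterminacy (say via $\Gamma(s)\Gamma(1-s)=\pi/\sin\pi s$) shows $c\neq0$ exactly when $\chi(-1)=(-1)^k$. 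Combined with $L(k,\overline{\chi})\neq0$, this gives $L(1-k,\chi)\neq0$ in that case.

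The main obstacle is the non-vanishing input $L(k,\overline{\chi})\neq0$. For $k\geq2$ this is elementary, since the Euler product $\prod_p(1-\overline{\chi}(p)p^{-k})^{-1}$ converges absolutely to a nonzero value; but the case $k=1$ forces $\chi(-1)=-1$, so that $\overline{\chi}$ is an odd nontrivial character, and then $L(1,\overline{\chi})\neq0$ is no longer elementary --- it is exactly Dirichlet's classical non-vanishing theorem, which I would invoke as a known result. A secondary, purely technical point is the bookkeeping in extracting $c$ from the functional equation, which is most cleanly organized by splitting into the parities of $k$ against the sign $\chi(-1)$.
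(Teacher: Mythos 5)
Your argument is correct, but there is nothing in the paper to compare it against: the authors prove this lemma purely by citation, referring part (i) to Lang, Chapter XIV, Corollary of Theorem 2.2, and part (ii) to Chapter XIV, Theorem 2.3. What you have written is essentially a self-contained reconstruction of those standard proofs. For (ii), the decomposition $L(s,\chi)=N^{-s}\sum_{a=1}^{N}\chi(a)\zeta(s,a/N)$, the special values $\zeta(1-k,x)=-B_k(x)/k$, and the generating-function identification $B_{k,\chi}=N^{k-1}\sum_{a=1}^{N}\chi(a)B_k(a/N)$ all check out (the discrepancy between summing to $N$ and to $N-1$ is harmless since $\chi(N)=0$ for nontrivial $\chi$, and the pole of each $\zeta(s,a/N)$ at $s=1$ is irrelevant at $s=1-k\leq 0$). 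For (i), your parity computation $F(-t)=\chi(-1)F(t)$ is right and gives the vanishing direction, and the functional-equation analysis is also right: using $\Gamma(s)\Gamma(1-s)=\pi/\sin\pi s$ the factor $1/\bigl(\Gamma(s)(e^{\pi is}-e^{-\pi is})\bigr)$ tends to the nonzero limit $(k-1)!/(2\pi i)$ at $s=1-k$, while $e^{\pi is/2}-\chi(-1)e^{-\pi is/2}$ evaluates to $i^{1-k}\bigl(1+\chi(-1)(-1)^{k}\bigr)$, which vanishes exactly when $\chi(-1)\neq(-1)^k$. Note that this computation in fact delivers \emph{both} directions of (i) at once (given $S(\chi)\neq0$ for primitive $\chi$ and $L(k,\overline{\chi})\neq0$), so the generating-function parity argument is logically redundant, though it is a pleasant independent check that avoids the functional equation for the vanishing half. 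You are also right to flag that the only genuinely deep input is $L(1,\overline{\chi})\neq0$ when $k=1$, which is Dirichlet's nonvanishing theorem and must be quoted.
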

\begin{proof}
(i) See \cite[Chapter XIV Corollary of Theorem 2.2]{Lang}.\\
(ii) See \cite[Chapter XIV Theorem 2.3]{Lang}.
\end{proof}

\begin{proposition}\label{Eisenstein}
Let $\chi$ and $\psi$ be primitive Dirichlet characters modulo $L$
and $M$, respectively. Let $k$ be an integer such that
$\chi(-1)\psi(-1)=(-1)^k$. Define
\begin{equation*}
E_{k,\chi,\psi}(\tau):=c_0+\sum_{n=1}^\infty\bigg(
\sum_{d>0,d|n}\chi(n/d)\psi(d)d^{k-1}\bigg)q^{n}\in\mathbb{C}[[q]]
\end{equation*}
with
\begin{equation*}
c_0:=\left\{\begin{array}{ll} 0 & \textrm{if}~L>1\\
L(1-k,\psi)/2 & \textrm{if}~L=1,
\end{array}\right.
\end{equation*}
and set $\chi(d)=\psi(d)=0$ if $\gcd(d,LM)\neq1$. Except for the
case when $k=2$ and $\chi=\psi=1$ the function
$E_{k,\chi,\psi}(\tau)$ defines an element of
$\mathcal{M}_k(LM,\chi\psi)$, which is called an \textit{Eisenstein
series}.
\end{proposition}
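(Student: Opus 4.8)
The plan is to realize $E_{k,\chi,\psi}(\tau)$ as a normalized double Eisenstein sum twisted by the two characters, to verify its automorphy by an explicit change of variables in the summation, and then to read off the Fourier expansion and the constant term from the Lipschitz summation formula and the $L$-value identity of Lemma \ref{L-function}. I would begin with the raw series
\begin{equation*}
G_{k,\chi,\psi}(\tau):=\sum_{(m,n)\in\mathbb{Z}^2\setminus\{(0,0)\}}\frac{\chi(m)\overline{\psi}(n)}{(mM\tau+n)^k},
\end{equation*}
where $\chi(m)=0$ unless $\gcd(m,L)=1$ and $\overline{\psi}(n)=0$ unless $\gcd(n,M)=1$. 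For $k\geq3$ this converges absolutely and locally uniformly on $\mathbb{H}$ and hence is holomorphic; for the remaining weights $k=1,2$ I would insert the Hecke factor $\abs{mM\tau+n}^{-2s}$, prove convergence for $\mathrm{Re}(s)$ large, and continue analytically to $s=0$. The hypothesis $\chi(-1)\psi(-1)=(-1)^k$ enters exactly here: under $(m,n)\mapsto(-m,-n)$ the general term is multiplied by $\chi(-1)\overline{\psi}(-1)(-1)^{-k}$, and this factor equals $1$ precisely under that hypothesis, so the sum is not forced to vanish. The single pair $(k,\chi,\psi)=(2,1,1)$ must be discarded because its analytic continuation acquires the well-known nonholomorphic term proportional to $1/\mathrm{Im}(\tau)$, leaving no holomorphic Eisenstein series of weight $2$ and level $1$.

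Next I would establish the transformation law. For $\gamma=\left(\begin{smallmatrix}a&b\\c&d\end{smallmatrix}\right)\in\Gamma_0(LM)$ write $c=LMc_1$ and reindex by $(m,n)\mapsto(m,n)A$ with $A=\left(\begin{smallmatrix}a&Mb\\Lc_1&d\end{smallmatrix}\right)$; since $\det A=ad-bc=1$, this $A$ lies in $\mathrm{SL}_2(\mathbb{Z})$ and permutes $\mathbb{Z}^2\setminus\{(0,0)\}$. A direct computation shows $mM\gamma\tau+n=(c\tau+d)^{-1}(m'M\tau+n')$ with $(m',n')=(m,n)A$, so that after reindexing the automorphy factor $(c\tau+d)^k$ comes out. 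Reducing the two characters modulo $L$ and $M$ respectively and using $LM\mid c$ together with $ad\equiv1\pmod{LM}$, the character weight collapses to $\chi(d)\overline{\psi}(a)=\chi(d)\psi(d)=(\chi\psi)(\gamma)$. This yields $G_{k,\chi,\psi}|[\gamma]_k=(\chi\psi)(\gamma)\,G_{k,\chi,\psi}$, which is condition (i) for membership in $\mathcal{M}_k(LM,\chi\psi)$.

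To compute the Fourier expansion I would separate the terms with $m=0$ from those with $m\neq0$. The terms with $m=0$ vanish when $L>1$ because then $\chi(0)=0$, and when $L=1$ they sum to $2L(k,\overline{\psi})$ after using $\psi(-1)=(-1)^k$. For $m\neq0$ I would apply the Lipschitz formula in its twisted form, namely
\begin{equation*}
\sum_{n\in\mathbb{Z}}\overline{\psi}(n)(w+n)^{-k}=\frac{(-2\pi i)^k\,S(\overline{\psi})}{M^k(k-1)!}\sum_{r=1}^\infty\psi(r)r^{k-1}e^{2\pi irw/M},
\end{equation*}
with $w=mM\tau$, which turns the inner sum into a power series in $q$. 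Collecting over $m\neq0$ and setting $n=mr$, the double sum reorganizes by multiplicativity into $\sum_{n\geq1}\bigl(\sum_{d>0,d\mid n}\chi(n/d)\psi(d)d^{k-1}\bigr)q^n$, up to an explicit nonzero constant $\kappa=2(-2\pi i)^kS(\overline{\psi})/\bigl(M^k(k-1)!\bigr)$. Dividing $G_{k,\chi,\psi}$ by $\kappa$ normalizes the coefficient of every $q^n$ ($n\geq1$) to the stated divisor sum, and it remains only to match constant terms: when $L=1$ one checks $\kappa^{-1}\cdot2L(k,\overline{\psi})=L(1-k,\psi)/2$, which follows from the functional equation for $L(s,\overline{\psi})$ recorded above together with Lemma \ref{L-function}(ii), since the factors $(2\pi/M)^k$, $\Gamma(k)=(k-1)!$ and the Gauss sum in that equation cancel $\kappa$ exactly.

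Finally, holomorphy at every cusp follows because the $q$-expansion just obtained has no terms of negative order, and the analogous computation at an arbitrary cusp $s\in\mathbb{Q}\cup\{\infty\}$, obtained by conjugating $G_{k,\chi,\psi}$ by a scaling matrix in $\mathrm{SL}_2(\mathbb{Z})$, again produces a bounded expansion; this is condition (ii), so $\kappa^{-1}G_{k,\chi,\psi}=E_{k,\chi,\psi}\in\mathcal{M}_k(LM,\chi\psi)$. I expect the main obstacle to be the low-weight cases $k=1,2$: justifying the Hecke-trick continuation of $G_{k,\chi,\psi}$ to a holomorphic function at $s=0$, and isolating precisely why $(k,\chi,\psi)=(2,1,1)$ is the sole exception, requires the most care. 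The remaining delicate point is the bookkeeping of the Gauss sum $S(\overline{\psi})$ together with the $\Gamma$- and $\pi$-factors in the functional equation, so that Lemma \ref{L-function} delivers the constant term in the normalized form $L(1-k,\psi)/2$.
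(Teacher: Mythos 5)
The paper offers no argument for this proposition at all: its ``proof'' is a citation to Miyake (Theorem 4.7.1 and Lemma 7.2.19). Your sketch is therefore not so much a different route as an unpacking of the standard construction that those cited results encapsulate, and in outline it is correct. The reindexing matrix $A=\left(\begin{smallmatrix}a&Mb\\Lc_1&d\end{smallmatrix}\right)$ does lie in $\mathrm{SL}_2(\mathbb{Z})$ and yields the character $\chi(d)\overline{\psi}(a)=(\chi\psi)(d)$ via $ad\equiv1\pmod{LM}$; your twisted Lipschitz formula is exactly right (split $n=a+Mt$, apply the classical formula to $(w+a)/M$, and recognize the Gauss sum $S(\overline{\psi})$); and the constant-term identity $\kappa^{-1}\cdot 2L(k,\overline{\psi})=L(1-k,\psi)/2$ checks out against the functional equation (e.g.\ it reduces to the classical evaluation of $\zeta(k)$ when $M=1$). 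Two caveats. First, the folding of the $m<0$ terms onto $m>0$ (which produces the factor $2$ in $\kappa$ and lets you apply Lipschitz only for $\mathrm{Im}(w)>0$) uses the parity hypothesis a second time; you invoke that hypothesis only for non-vanishing, so make the folding step explicit. Second, and more substantively, the part you defer --- the Hecke-trick continuation for $k=1,2$ and the identification of $(k,\chi,\psi)=(2,1,1)$ as the unique case where the non-holomorphic $1/\mathrm{Im}(\tau)$ term survives --- is precisely the content of Miyake's Lemma 7.2.19 and is not optional for this paper: three of the six cases in Table \ref{table}, including the titular form with $(k,N)=(2,13)$, have $k=2$, so the weight-$2$ case with nontrivial $\psi$ is the one actually used. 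Your outline is sound, but that is the step that would need to be carried out in full for the argument to stand on its own.
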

\begin{proof}
See \cite[Theorem 4.7.1 and Lemma 7.2.19]{Miyake}.
\end{proof}

\begin{corollary}\label{two}
Let $k$ and $N$ be positive integers such that $(-1)^kN$ is the
discriminant of a quadratic field. Then the Eisenstein series
\begin{eqnarray}
G_{k,N}(\tau)&:=&\frac{L(1-k,\chi_{(-1)^kN})}{2}+\sum_{n=1}^\infty\bigg(
\sum_{d>0,d|n}\chi_{(-1)^kN}(d)d^{k-1}\bigg)q^n=\frac{L(1-k,\chi_{(-1)^kN})}{2}+q+O(q^2),\phantom{aaaaa}
\label{G}\\
H_{k,N}(\tau)&:=&\sum_{n=1}^\infty\bigg(
\sum_{d>0,d|n}\chi_{(-1)^kN}(n/d)d^{k-1}\bigg)q^n=q+O(q^2)\label{H}
\end{eqnarray}
are linearly independent elements of
$\mathcal{M}_k(N,\chi_{(-1)^kN})$.
\end{corollary}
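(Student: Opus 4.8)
The plan is to recognize both $G_{k,N}$ and $H_{k,N}$ as instances of the Eisenstein series $E_{k,\chi,\psi}$ supplied by Proposition \ref{Eisenstein}, and then to read off linear independence directly from the displayed $q$-expansions. Write $\chi=\chi_{(-1)^kN}$ for brevity; since $(-1)^kN$ is the discriminant of a quadratic field, the discussion preceding Corollary \ref{dim2} shows $\chi$ is a primitive Dirichlet character modulo $|(-1)^kN|=N$, while the trivial character $1$ is primitive modulo $1$. Comparing Fourier coefficients, the divisor sum $\sum_{d|n}\chi(d)d^{k-1}$ defining $G_{k,N}$ is exactly that of $E_{k,1,\chi}$ (taking the first character to be $1$ and the second to be $\chi$), and $\sum_{d|n}\chi(n/d)d^{k-1}$ defining $H_{k,N}$ is that of $E_{k,\chi,1}$. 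In the first case the primitive character in the first slot is trivial, so its modulus is $L=1$ and the constant term prescribed by Proposition \ref{Eisenstein} is $L(1-k,\chi)/2$, matching (\ref{G}); in the second case that character is $\chi$ with modulus $L=N>1$, forcing constant term $0$ as in (\ref{H}).

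First I would check the hypotheses of Proposition \ref{Eisenstein}. The single fact driving everything is the sign evaluation $\chi_{(-1)^kN}(-1)=(-1)^k$: by the convention (\ref{def-1}), $\chi_{(-1)^kN}(-1)=\big(\tfrac{(-1)^kN}{-1}\big)$ equals $+1$ when $(-1)^kN>0$ (that is, $k$ even) and $-1$ when $(-1)^kN<0$ (that is, $k$ odd), which is precisely $(-1)^k$ in both parities. Since the trivial character has value $1$ at $-1$, the parity condition $\chi(-1)\psi(-1)=(-1)^k$ holds for both choices of $(\chi,\psi)$. Moreover $\chi$ is nontrivial because its modulus $N>1$, so neither choice falls into the excluded case $k=2$, $\chi=\psi=1$. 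Hence Proposition \ref{Eisenstein} places both series in $\mathcal{M}_k(1\cdot N,1\cdot\chi)=\mathcal{M}_k(N,\chi_{(-1)^kN})$.

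It then remains to prove linear independence, which follows immediately from the leading Fourier coefficients once the relevant $L$-value is known to be nonzero. Suppose $aG_{k,N}+bH_{k,N}=0$. Comparing constant terms gives $a\,L(1-k,\chi_{(-1)^kN})/2=0$. The crux is the nonvanishing of this $L$-value: by Lemma \ref{L-function}(i), $L(1-k,\chi_{(-1)^kN})\neq0$ exactly when $\chi_{(-1)^kN}(-1)=(-1)^k$, which is the sign identity already established. Hence $a=0$, and then $bH_{k,N}=0$ together with $H_{k,N}=q+O(q^2)\neq0$ forces $b=0$. The only real obstacle here is the bookkeeping around the value of $\chi_{(-1)^kN}(-1)$; conveniently, that one computation simultaneously verifies the parity hypothesis of Proposition \ref{Eisenstein}, excludes the degenerate Eisenstein case, and guarantees the nonvanishing of the $L$-value via Lemma \ref{L-function}(i), so no genuinely hard estimate is needed.
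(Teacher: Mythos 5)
Your proposal is correct and follows essentially the same route as the paper: identify $G_{k,N}$ and $H_{k,N}$ as $E_{k,1,\chi_{(-1)^kN}}$ and $E_{k,\chi_{(-1)^kN},1}$ via Proposition \ref{Eisenstein}, verify the parity condition $\chi_{(-1)^kN}(-1)=(-1)^k$ from (\ref{def-1}), and deduce linear independence from the nonvanishing of the constant term $L(1-k,\chi_{(-1)^kN})/2$ guaranteed by Lemma \ref{L-function}(i). You merely make explicit the final comparison of constant and $q$-coefficients that the paper leaves implicit.
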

\begin{proof}
Set $\chi=1$ and $\psi=\chi_{(-1)^kN}$, which are primitive
Dirichlet characters modulo $1$ and $N$ ($\geq3$), respectively.
Since $\chi(-1)\psi(-1)=\chi_{(-1)^kN}(-1)=(-1)^k$ by the definition
(\ref{def-1}), it follows from Proposition \ref{Eisenstein} that the
Eisenstein series
\begin{equation*}
E_{k,\chi,\psi}(\tau)=
\frac{L(1-k,\chi_{(-1)^kN})}{2}+\sum_{n=1}^\infty\bigg(
\sum_{d>0,d|n}\chi_{(-1)^kN}(d)d^{k-1}\bigg)q^n=\frac{L(1-k,\chi_{(-1)^kN})}{2}+q+O(q^2)
\end{equation*}
belongs to $\mathcal{M}_k(N,\chi_{(-1)^kN})$. Observe that the
constant term $L(1-k,\chi_{(-1)^kN})/2$ does not vanish by Lemma
\ref{L-function}(i).
\par
Similarly, if we let $\chi=\chi_{(-1)^kN}$ and $\psi=1$, then the
Eisenstein series
\begin{equation*}
E_{k,\chi,\psi}(\tau)= \sum_{n=1}^\infty\bigg(
\sum_{d>0,d|n}\chi_{(-1)^kN}(d)d^{k-1}\bigg)q^n=q+O(q^2)
\end{equation*}
belongs to $\mathcal{M}_k(N,\chi_{(-1)^kN})$ by Proposition
\ref{Eisenstein}. This completes the proof.
\end{proof}

\begin{remark}
Although Corollary \ref{two} was originally given by Hecke
(\cite[p.818]{Hecke}), we derive it as a direct corollary of a more
generalized result (Proposition \ref{Eisenstein}) due to Miyake.
\end{remark}

Let $k$ be an integer and $\chi$ be a Dirichlet character modulo
$N$. For a positive integer $m$, the \textit{Hecke operator}
$\cdot|T_{m,k,\chi}$ is defined on the functions
$f(\tau)=\sum_{n=0}^\infty a(n)q^n\in\mathcal{M}_k(N,\chi)$ by the
rule
\begin{equation}\label{Heckeoperator}
f(\tau)|T_{m,k,\chi}:=\sum_{n=0}^\infty\bigg(\sum_{d>0,d|\gcd(m,n)}\chi(d)d^{k-1}a(mn/d^2)\bigg)q^n.
\end{equation}
Here we set $\chi(d)=0$ if $\gcd(N,d)\neq1$.

\begin{proposition}\label{Hecke}
With the notation as above, the operator $\cdot|T_{m,k,\chi}$
preserves the space $\mathcal{M}_k(N,\chi)$.
\end{proposition}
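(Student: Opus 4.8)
The plan is to exhibit $\cdot|T_{m,k,\chi}$ as a finite $\mathbb{C}$-linear combination of weight $k$ slash operators by integral matrices of determinant $m$, and then to verify the two defining properties of $\mathcal{M}_k(N,\chi)$ directly on the resulting function: the transformation law $g|[\gamma]_k=\chi(\gamma)g$ for $\gamma\in\Gamma_0(N)$, and holomorphy on $\mathbb{H}$ and at the cusps. As a preliminary I would extend the slash operator of the excerpt to any $\alpha=\begin{pmatrix}a&b\\c&d\end{pmatrix}\in M_2(\mathbb{Z})$ with $\det\alpha>0$ by $f|[\alpha]_k:=(\det\alpha)^{k-1}(c\tau+d)^{-k}(f\circ\alpha)$, which reduces to the definition given there when $\det\alpha=1$.

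I would first reduce to the case of prime $m$. Working only with the Fourier-coefficient formula (\ref{Heckeoperator}), one verifies the purely formal identities $\cdot|T_{m,k,\chi}|T_{n,k,\chi}=\cdot|T_{mn,k,\chi}$ for $\gcd(m,n)=1$ and $\cdot|T_{p,k,\chi}|T_{p^r,k,\chi}=\cdot|T_{p^{r+1},k,\chi}+\chi(p)p^{k-1}\cdot|T_{p^{r-1},k,\chi}$ for each prime $p$; these are rearrangements of the inner divisor sums and use no modularity. It follows that every $T_{m,k,\chi}$ is a polynomial in the operators $T_{p,k,\chi}$ with $p\mid m$, so it suffices to prove that $\cdot|T_{p,k,\chi}$ preserves $\mathcal{M}_k(N,\chi)$ for a single prime $p$.

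For the prime case, a short computation using $\sum_{b=0}^{p-1}e^{2\pi inb/p}$ (which equals $p$ or $0$ according as $p\mid n$ or not) shows that for $f=\sum_{n\geq0}a(n)q^n$ one has
\begin{equation*}
f|T_{p,k,\chi}=\sum_{b=0}^{p-1}f\Big|\Big[\begin{pmatrix}1&b\\0&p\end{pmatrix}\Big]_k+\chi(p)\,f\Big|\Big[\begin{pmatrix}p&0\\0&1\end{pmatrix}\Big]_k,
\end{equation*}
the matrices forming a complete set of representatives for the left coset space $\Gamma_0(N)\backslash\Delta_p$, where $\Delta_p$ is the set of $\alpha\in M_2(\mathbb{Z})$ with $\det\alpha=p$, lower-left entry divisible by $N$, and upper-left entry prime to $N$ (the second term disappearing when $p\mid N$, where $\chi(p)=0$); here each representative is weighted by $\chi$ of its upper-left entry. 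Since each $f\circ\alpha$ is holomorphic on $\mathbb{H}$ and $c\tau+d$ never vanishes there, $f|T_{p,k,\chi}$ is holomorphic on $\mathbb{H}$. For the transformation law, right multiplication by any $\gamma\in\Gamma_0(N)$ maps $\Delta_p$ to itself and permutes these left cosets, so $\alpha\gamma=\delta\alpha'$ with $\delta\in\Gamma_0(N)$ and $\alpha'$ again a representative; substituting $f|[\delta]_k=\chi(\delta)f$ and tracking the upper-left entries shows that the attached characters recombine to produce exactly the factor $\chi(\gamma)$, giving $(f|T_{p,k,\chi})|[\gamma]_k=\chi(\gamma)(f|T_{p,k,\chi})$. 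Finally, holomorphy at the cusps is immediate at $\infty$ because the $q$-expansion (\ref{Heckeoperator}) involves only nonnegative powers of $q$ (for $n\geq0$ and $d\mid\gcd(p,n)$ the index $pn/d^2$ is a nonnegative integer), and at the remaining cusps it follows because each $f|[\alpha]_k$ with $\alpha\in\Delta_p$ inherits holomorphy at the cusps from $f$.

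The step I expect to be the main obstacle is the character bookkeeping in the transformation law: one must check that right multiplication by an arbitrary $\gamma\in\Gamma_0(N)$ genuinely permutes the determinant-$p$ coset representatives and that the nebentypus weights attached to them recombine into precisely $\chi(\gamma)$ times the original sum, using the congruences $a\gamma$-upper-left $\equiv aA\pmod N$ and $AD\equiv1\pmod N$ forced by $N\mid c$. It is exactly this requirement that forces the character $\chi$ to appear in the definition (\ref{Heckeoperator}). The degenerate case $p\mid N$, where one representative is lost and $\chi(p)=0$, must be checked separately but is routine once the generic case is settled.
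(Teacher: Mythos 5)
The paper offers no argument of its own here---it simply cites Koblitz, Chapter III, Propositions 36 and 39---and your outline reproduces exactly the standard proof found there: reduction to prime index via the formal identities $T_mT_n=T_{mn}$ ($\gcd(m,n)=1$) and $T_pT_{p^r}=T_{p^{r+1}}+\chi(p)p^{k-1}T_{p^{r-1}}$, the decomposition of $T_p$ into $\chi$-weighted slash operators over representatives of $\Gamma_0(N)\backslash\Delta_p$, and the verification of the transformation law and the cusp conditions. Your proposal is correct, including the character bookkeeping (from $\alpha\gamma=\delta\alpha'$ one gets $a_\alpha a_\gamma\equiv a_\delta a_{\alpha'}\pmod{N}$ and $a_\gamma d_\gamma\equiv a_\delta d_\delta\equiv 1\pmod{N}$, whence $\chi(a_\alpha)\chi(d_\delta)=\chi(d_\gamma)\chi(a_{\alpha'})$) and the degenerate count of $p$ rather than $p+1$ cosets when $p\mid N$.
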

\begin{proof}
See \cite[Chapter 3 Propositions 36 and 39]{Koblitz}.
\end{proof}

\section{Theta functions associated with quadratic forms}

Let $A$ be an $r\times r$ positive definite symmetric matrix over
$\mathbb{Z}$ with even diagonal entries. Let $Q$ be its associated
quadratic form, namely
\begin{equation*}
Q(\mathbf{x})=\frac{1}{2}\mathbf{x}^TA\mathbf{x}\quad\textrm{for}~
\mathbf{x}=\begin{pmatrix}x_1\\\vdots\\x_r\end{pmatrix}\in\mathbb{Z}^r.
\end{equation*}
We define the theta function $\Theta_Q(\tau)$ on $\mathbb{H}$
associated with $Q$ by
\begin{equation*}
\Theta_Q(\tau):=\sum_{\mathbf{x}\in\mathbb{Z}^r}e^{2\pi
iQ(\mathbf{x})\tau} =\sum_{n=0}^\infty r_Q(n)q^n,
\end{equation*}
where
\begin{equation*}
r_Q(n):=\#\{\mathbf{x}\in\mathbb{Z}^r~;~Q(\mathbf{x})=n\}
\end{equation*}
is the representation number of $n$ by $Q$.

\begin{proposition}\label{theta}
With the notations as above, we further assume that $r$ is even. Let
$N$ be a positive integer such that $NA^{-1}$ is an integral matrix
with even diagonal entries. Then $\Theta_Q(\tau)$ belongs to
$\mathcal{M}_{r/2}(N,\chi_{(-1)^{r/2}\det(A)})$.
\end{proposition}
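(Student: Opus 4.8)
The plan is to verify directly the two defining properties of an element of $\mathcal{M}_{r/2}(N,\chi)$ with $\chi=\chi_{(-1)^{r/2}\det(A)}$: the automorphy relation $\Theta_Q|[\gamma]_{r/2}=\chi(\gamma)\Theta_Q$ for every $\gamma\in\Gamma_0(N)$, and holomorphy at every cusp. Note that $\Gamma_1(N)$-invariance is then automatic, since $\chi(\gamma)=\chi(d)=1$ for $\gamma=\left(\begin{smallmatrix}a&b\\c&d\end{smallmatrix}\right)\in\Gamma_1(N)$. First I would record two elementary reductions. Because the diagonal entries of $A$ are even, $\mathbf{x}^TA\mathbf{x}\in2\mathbb{Z}$, so $Q(\mathbf{x})\in\mathbb{Z}$ and $\Theta_Q(\tau)=\sum_{n\geq0}r_Q(n)q^n$ is a genuine power series in $q=e^{2\pi i\tau}$; this already yields invariance under $T\colon\tau\mapsto\tau+1$ and boundedness as $\mathrm{Im}(\tau)\to\infty$. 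Moreover $\Theta_Q$ is holomorphic on $\mathbb{H}$, since the defining sum of Gaussians converges locally uniformly there.

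The heart of the argument is the behavior under inversion, which I would obtain from the Poisson summation formula. Writing $\Theta_Q(\tau)=\sum_{\mathbf{x}\in\mathbb{Z}^r}e^{\pi i\tau\,\mathbf{x}^TA\mathbf{x}}$ and computing the $r$-dimensional Gaussian Fourier transform of $\mathbf{x}\mapsto e^{\pi i\tau\,\mathbf{x}^TA\mathbf{x}}$ yields the inversion formula
\[
\Theta_Q\!\left(-\frac1\tau\right)=\frac{(-i\tau)^{r/2}}{\sqrt{\det A}}\,\Theta_{Q'}(\tau),
\]
where $Q'$ is the quadratic form attached to $A^{-1}=\tfrac1N(NA^{-1})$. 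Since $A^{-1}=\tfrac1N B$ with $B:=NA^{-1}$, one has $\Theta_{Q'}(\tau)=\Theta_{Q_B}(\tau/N)$, and the hypothesis that $B$ is integral with even diagonal entries is exactly what makes $\Theta_{Q_B}$ again a theta series of the same admissible shape. Rescaling $\tau\mapsto N\tau$ then gives the behavior of $\Theta_Q$ under the Fricke involution $\tau\mapsto-1/(N\tau)$, which is what lets me close the computation on a set of coset representatives.

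I would then combine $T$ with this inversion formula to treat a generating set of $\Gamma_0(N)$, tracking the automorphy factor $(c\tau+d)^{-r/2}$ through each step. The main obstacle is the precise identification of the resulting multiplier system with the Kronecker-symbol character $\chi_{(-1)^{r/2}\det(A)}$: the Fourier transform introduces the factor $(\det A)^{-1/2}$ together with a fixed eighth root of unity coming from the signature, and matching these with $\left(\frac{(-1)^{r/2}\det A}{d}\right)$ for each $d$ requires a careful evaluation of the associated Gauss sums and an appeal to quadratic reciprocity. It is here that both hypotheses are essential: $r$ even makes the weight $r/2$ an integer, so $(c\tau+d)^{-r/2}$ is single-valued and no half-integral-weight theta multiplier intervenes; and the integrality of $NA^{-1}$ with even diagonal pins down both the level $N$ and the conductor of $\chi$.

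Finally, for holomorphy at the cusps I would use that every cusp is $\Gamma_0(N)$-equivalent to one of finitely many representatives, and apply the inversion formula (and its $T$-translates) to expand $\Theta_Q$ in the appropriate local parameter $q_h=e^{2\pi i\tau/h}$. Since each transformed expression is again a convergent sum of Gaussians with positive-definite quadratic exponent, it remains bounded as one approaches the cusp, so no negative powers of $q_h$ occur and $\Theta_Q$ is holomorphic there. This would establish $\Theta_Q\in\mathcal{M}_{r/2}(N,\chi_{(-1)^{r/2}\det(A)})$. Alternatively, the whole statement is the classical transformation theorem for theta series and may be quoted directly, e.g.\ from \cite[Corollary 4.9.5]{Miyake}.
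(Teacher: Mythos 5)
The paper does not actually prove this proposition: its ``proof'' is the citation of \cite[Theorem 10.9]{Iwaniec} and \cite[Corollary 4.9.5]{Miyake}, so the last sentence of your proposal is precisely the paper's argument. Your sketched direct proof, however, contains a genuine gap at its central reduction step. The Poisson-summation inversion formula and the identity $\Theta_{Q'}(\tau)=\Theta_{Q_B}(\tau/N)$ with $B=NA^{-1}$ are correct, but the plan to ``combine $T$ with this inversion formula to treat a generating set of $\Gamma_0(N)$'' does not go through: the Fricke involution $W_N=\left(\begin{smallmatrix}0&-1\\N&0\end{smallmatrix}\right)$ is not an element of $\Gamma_0(N)$, and the subgroup of $\Gamma_0(N)$ generated by $T=\left(\begin{smallmatrix}1&1\\0&1\end{smallmatrix}\right)$ and its Fricke conjugate $W_NTW_N^{-1}=\left(\begin{smallmatrix}1&0\\-N&1\end{smallmatrix}\right)$ is, for $N\geq 5$, a free group of infinite index in $\Gamma_0(N)$; verifying the transformation law on those elements therefore says nothing about a general $\gamma\in\Gamma_0(N)$. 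The classical proofs you would be reconstructing (Shimura, Iwaniec Chapter 10, Miyake \S 4.9) do not argue by generators at all: they compute $\Theta_Q(\tau)|[\gamma]_{r/2}$ for an arbitrary $\gamma=\left(\begin{smallmatrix}a&b\\c&d\end{smallmatrix}\right)\in\Gamma_0(N)$ with $c\neq 0$ by splitting the lattice sum over residue classes modulo $c$ and applying Poisson summation to each class, which produces generalized Gauss sums $\sum_{\mathbf{x}\bmod c}e^{2\pi i aQ(\mathbf{x})/c}$ whose evaluation (via quadratic reciprocity) is exactly where the character $\chi_{(-1)^{r/2}\det A}$ and the role of the hypothesis on $NA^{-1}$ appear.

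Relatedly, you explicitly defer the identification of the multiplier with $\left(\frac{(-1)^{r/2}\det A}{d}\right)$ as ``the main obstacle,'' but that identification is the actual content of the proposition; without it your sketch establishes at most that $\Theta_Q$ transforms under some finite-order multiplier on some subgroup. The parts of your outline that are sound and complete are the integrality of $Q(\mathbf{x})$ (hence periodicity under $T$), holomorphy on $\mathbb{H}$, the inversion formula itself, and the cusp-holomorphy argument via bounded Gaussian sums in the local parameter. To turn the sketch into a proof you would need to replace the generating-set step by the general Gauss-sum computation, or simply quote the reference as the paper does.
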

\begin{proof}
See \cite[Theorem 10.9]{Iwaniec} or \cite[Corollary 4.9.5]{Miyake}.
\end{proof}

\begin{remark}\label{detremark}
If such a matrix $A$ exists in the statement of Proposition
\ref{theta}, then $(-1)^{r/2}\det(A)\equiv0$ or $1\pmod{4}$
(\cite[p.180]{Iwaniec}). So $\chi_{(-1)^k\det(A)}$ makes sense.
\end{remark}

Now we are ready to prove our main theorem as follows.

\begin{theorem}\label{main}
Let $k$ and $N$ be positive integers such that $(-1)^kN$ is a
discriminant of a quadratic field and
$\dim_\mathbb{C}\mathcal{M}_k(N,\chi_{(-1)^kN})=2$. Let $A$ be a
$2k\times2k$ positive definite symmetric matrix over $\mathbb{Z}$
with $\det(A)=N$ such that both $A$ and $NA^{-1}$ have even diagonal
entries. Let $Q$ be the quadratic form associated with $A$.
\begin{itemize}
\item[(i)] We have $r_Q(0)=1$ and
\begin{equation}\label{generalformula}
r_Q(n)=c_1\sum_{d>0,d|n}\chi_{(-1)^kN}(d)d^{k-1}+c_2\sum_{d>0,d|n}\chi_{(-1)^kN}(n/d)d^{k-1}\quad\textrm{for
any positive integer $n$},
\end{equation}
where
\begin{equation}\label{c1c2}
c_1=\frac{2}{L(1-k,\chi_{(-1)^kN})}\quad\textrm{and}\quad
c_2=r_Q(1)-\frac{2}{L(1-k,\chi_{(-1)^kN})}.
\end{equation}
\item[(ii)] Let $p$ be a prime not dividing $N$. If $m$ is a nonnegative integer
such that $\chi_{(-1)^kN}(p^m)=1$ \textup{(}this condition holds
true whenever $m$ is even\textup{)}, then we have the identity
\begin{equation*}
r_Q(1)r_Q(p^{m}n)=r_Q(p^{m})r_Q(n)\quad\textrm{for any positive
integer $n$ prime to $p$}.
\end{equation*}
\end{itemize}
\end{theorem}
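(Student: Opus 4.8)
The plan is to identify $\Theta_Q(\tau)$ explicitly inside a two-dimensional space of Eisenstein series. By Proposition~\ref{theta} (with $r=2k$ and $\det(A)=N$) the theta function lies in $\mathcal{M}_k(N,\chi_{(-1)^kN})$, and by Corollary~\ref{two} the forms $G_{k,N}$ and $H_{k,N}$ are linearly independent elements of this same space. Since the space is assumed to be two-dimensional, these two Eisenstein series form a basis, so I would write
\begin{equation*}
\Theta_Q(\tau)=a\,G_{k,N}(\tau)+b\,H_{k,N}(\tau)
\end{equation*}
for unique $a,b\in\mathbb{C}$ and determine them from the first two Fourier coefficients. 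Positive definiteness of $A$ forces $Q(\mathbf{x})=0$ only at $\mathbf{x}=\mathbf{0}$, giving $r_Q(0)=1$; comparing constant terms using the expansions (\ref{G}) and (\ref{H}) then yields $a\cdot L(1-k,\chi_{(-1)^kN})/2=1$, i.e.\ $a=c_1$, while comparing the coefficients of $q$ gives $r_Q(1)=a+b$, i.e.\ $b=c_2$. Reading off the coefficient of $q^n$ for $n\geq1$ produces exactly (\ref{generalformula}), proving (i).

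For (ii) I would argue purely arithmetically from (i). Writing $\chi=\chi_{(-1)^kN}$, set
\begin{equation*}
\sigma(n):=\sum_{d>0,d|n}\chi(d)d^{k-1}\quad\textrm{and}\quad\widetilde{\sigma}(n):=\sum_{d>0,d|n}\chi(n/d)d^{k-1},
\end{equation*}
so that $r_Q(n)=c_1\sigma(n)+c_2\widetilde{\sigma}(n)$ and $r_Q(1)=c_1+c_2$. Both $\sigma$ and $\widetilde{\sigma}$ are Dirichlet convolutions of multiplicative functions, hence multiplicative; in particular, since $\gcd(p^m,n)=1$, one has $\sigma(p^mn)=\sigma(p^m)\sigma(n)$ and $\widetilde{\sigma}(p^mn)=\widetilde{\sigma}(p^m)\widetilde{\sigma}(n)$. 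Substituting these into $r_Q(p^m)r_Q(n)$ and $r_Q(1)r_Q(p^mn)$ and expanding, a direct computation collapses the difference to
\begin{equation*}
r_Q(p^m)r_Q(n)-r_Q(1)r_Q(p^mn)=c_1c_2\,\big(\widetilde{\sigma}(n)-\sigma(n)\big)\big(\sigma(p^m)-\widetilde{\sigma}(p^m)\big).
\end{equation*}
Thus the whole identity reduces to showing that the last factor vanishes under the hypothesis $\chi(p^m)=1$.

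The crux is therefore the claim $\sigma(p^m)=\widetilde{\sigma}(p^m)$ whenever $\chi(p^m)=1$. I would compute $\sigma(p^m)=\sum_{j=0}^m\chi(p)^jp^{j(k-1)}$ and, after reindexing $d=p^j$, $\widetilde{\sigma}(p^m)=\chi(p)^m\sum_{j=0}^m\chi(p)^{-j}p^{j(k-1)}$. Because $\chi$ is a Kronecker symbol it is real-valued, so for $p\nmid N$ we have $\chi(p)\in\{\pm1\}$ and hence $\chi(p)^{-1}=\chi(p)$; this returns the inner sum to $\sigma(p^m)$, giving $\widetilde{\sigma}(p^m)=\chi(p)^m\sigma(p^m)=\chi(p^m)\sigma(p^m)$. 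The hypothesis $\chi(p^m)=1$ then forces $\widetilde{\sigma}(p^m)=\sigma(p^m)$, so the difference above is zero and the stated identity follows. The parenthetical remark is immediate, since $\chi(p^m)=\chi(p)^m=(\pm1)^m=1$ for even $m$.

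The main obstacle is not a hard estimate but correct bookkeeping in two places. First, one must be certain that $\Theta_Q$ carries no cusp-form contribution, which is exactly what the hypothesis $\dim_\mathbb{C}\mathcal{M}_k(N,\chi_{(-1)^kN})=2$ together with Corollary~\ref{two} guarantees, since the entire space is then spanned by Eisenstein series. Second, the cancellation in (ii) depends delicately on the reality of $\chi_{(-1)^kN}$: for a genuinely complex character the factor $\chi(p)^{-j}$ would not simplify, the two divisor sums $\sigma(p^m)$ and $\widetilde{\sigma}(p^m)$ would generically differ, and the multiplicative identity would fail.
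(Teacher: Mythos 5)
Your proof is correct. Part (i) follows the paper's own argument verbatim: $\Theta_Q\in\mathcal{M}_k(N,\chi_{(-1)^kN})$ by Proposition \ref{theta}, the two Eisenstein series form a basis by Corollary \ref{two} and the dimension hypothesis, and the coefficients $c_1,c_2$ are read off from the constant and linear terms. For part (ii), however, you take a genuinely different and more elementary route. The paper applies the Hecke operator $T_{p^m,k,\chi_{(-1)^kN}}$ to $\Theta_Q$, uses Proposition \ref{Hecke} to keep the result inside the two-dimensional space, expresses it in the basis $\{\Theta_Q,H_{k,N}\}$, and deduces $r_Q(1)\,\Theta_Q|T_{p^m,k,\chi_{(-1)^kN}}=r_Q(p^m)\,\Theta_Q$ by matching the first two Fourier coefficients (the computation of $r_Q(p^m)=r_Q(1)\sum_{a=0}^m\chi_{(-1)^kN}(p^a)p^{a(k-1)}$ being the same reduction you perform). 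You instead bypass Hecke operators entirely: using only the closed formula from (i) and the multiplicativity of the two divisor sums, you collapse the difference $r_Q(p^m)r_Q(n)-r_Q(1)r_Q(p^mn)$ to $c_1c_2\bigl(\widetilde{\sigma}(n)-\sigma(n)\bigr)\bigl(\sigma(p^m)-\widetilde{\sigma}(p^m)\bigr)$ and show the last factor vanishes exactly when $\chi_{(-1)^kN}(p^m)=1$; your identity $\widetilde{\sigma}(p^m)=\chi_{(-1)^kN}(p^m)\sigma(p^m)$ is correct since $\chi_{(-1)^kN}(p)=\pm1$ for $p\nmid N$. Your version buys transparency --- it isolates precisely the obstruction term and shows where the hypothesis enters --- and requires no input beyond (i); the paper's version buys a reusable structural statement (that $\Theta_Q$ behaves like a Hecke eigenvector for these operators), which is the more standard lens for such multiplicativity identities. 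Both are complete proofs.
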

\begin{proof}
(i) Since $A$ is positive definite, $r_Q(0)=1$. Consider the theta
function $\Theta_Q(\tau)=\sum_{n=0}^\infty r_Q(n)q^n$. Then it lies
in $\mathcal{M}_{k}(N,\chi_{(-1)^kN})$ by Proposition \ref{theta}.
Since we are assuming that
$\dim_\mathbb{C}\mathcal{M}_{k}(N,\chi_{(-1)^kN})=2$, we see from
Corollary \ref{two} that
\begin{equation}\label{combination}
\Theta_Q(\tau)=c_1G_{k,N}(\tau)+c_2H_{k,N}(\tau) \quad\textrm{for
some}~c_1,c_2\in\mathbb{C}.
\end{equation}
We can then determine $c_1$ and $c_2$ in (\ref{c1c2}) by observing
the first two terms of
\begin{equation*}
\Theta_Q(\tau)=1+r_Q(1)q+O(q^2),\quad
G_{k,N}(\tau)=\frac{L(1-k,\chi_{(-1)^kN})}{2}+q+O(q^2)\quad\textrm{and}\quad
H_{k,N}(\tau)=q+Q(q^2).
\end{equation*}
By (\ref{combination}) and the definitions (\ref{G}), (\ref{H}) we
obtain a general formula (\ref{generalformula}) for $r_Q(n)$
($n\geq1$).\\
(ii) Let $p$ be a prime not dividing $N$ and $m$ be a nonnegative
integer such that $\chi_{(-1)^kN}(p^m)=1$. By the formula
(\ref{generalformula}) we get that
\begin{eqnarray}
r_Q(p^{m})&=&c_1\sum_{a=0}^{m}\chi_{(-1)^kN}(p^a)p^{a(k-1)}+c_2
\sum_{a=0}^{m}\chi_{(-1)^kN}(p^{m-a})p^{a(k-1)}\nonumber\\&=&
(c_1+c_2)\sum_{a=0}^{m}\chi_{(-1)^kN}(p^a)p^{a(k-1)}\quad\textrm{by
the facts $\chi_{(-1)^kN}(p^m)=1$ and
$\chi_{(-1)^kN}(p)=\pm1$}\nonumber\\
&=&r_Q(1)\sum_{a=0}^{m}\chi_{(-1)^kN}(p^a)p^{a(k-1)} \quad\textrm{by
(\ref{c1c2})}.\label{primecondition}
\end{eqnarray}
On the other hand, we deduce that
\begin{eqnarray*}
r_Q(1)\Theta_Q(\tau)|T_{p^{m},k,\chi_{(-1)^kN}}&=& r_Q(1)\bigg(
r_Q(0)\sum_{a=0}^{m}\chi_{(-1)^kN}(p^a)p^{a(k-1)}+r_Q(p^{m})q+O(q^2)\bigg)\quad
\textrm{by the definition (\ref{Heckeoperator})}\\
&=&r_Q(0)r_Q(p^{m})+r_Q(1)r_Q(p^{m})q+O(q^2)\quad\textrm{by (\ref{primecondition})}\\
&=&r_Q(p^{m})(r_Q(0)+r_Q(1)q+O(q^2)),
\end{eqnarray*}
which turns out to be an element of
$\mathcal{M}_k(N,\chi_{(-1)^kN})$ by Proposition \ref{Hecke}. Taking
the set $\{\Theta_Q(\tau)=1+r_Q(1)q+O(q^2)$,
$H_{k,N}(\tau)=q+O(q^2)\}$ as a basis of the space
$\mathcal{M}_k(N,\chi_{(-1)^kN})$ one can derive
\begin{equation*}
r_Q(1)\Theta_Q(\tau)|T_{p^{m},k,\chi_{(-1)^kN}}=r_Q(p^{m})\Theta_Q(\tau).
\end{equation*}
Therefore, comparing the Fourier coefficients of the term $q^n$ for
any positive integer $n$ prime to $p$ we achieve the identity
\begin{equation*}
r_Q(1)r_Q(p^{m}n)= r_Q(p^{m})r_Q(n)
\end{equation*}
as desired.
\end{proof}

\begin{remark}\label{formula}
\begin{itemize}
\item[(i)]
Let $p$ be a prime and $m$ be a nonnegative integer. If $p$ divides
$N$, then $\chi_{(-1)^kN}(p)=0$ and we get from
(\ref{generalformula}) that
\begin{equation}\label{N}
r_Q(p^m)=c_1+c_2p^{m(k-1)}.
\end{equation}
If $p$ does not divide $N$, then we see from (\ref{generalformula})
that
\begin{eqnarray}
r_Q(p^m)&=&c_1\sum_{a=0}^m\chi_{(-1)^kN}(p^a)p^{a(k-1)}+c_2\sum_{a=0}^m
\chi_{(-1)^kN}(p^{m-a})p^{a(k-1)}\nonumber\\
&=&(c_1+c_2\chi_{(-1)^kN}(p)^m)\sum_{a=0}^m(\chi_{(-1)^kN}(p)p^{k-1})^a
\quad\textrm{by the fact $\chi_{(-1)^kN}(p)^2=1$}\nonumber\\
&=&(c_1+c_2\chi_{(-1)^kN}(p)^m)\frac{1-(\chi_{(-1)^kN}(p)p^{k-1})^{m+1}}{1-\chi_{(-1)^kN}(p)p^{k-1}}.\label{notdiv}
\end{eqnarray}
\item[(ii)] Assume that $N$ is a prime.
Let $n$ ($\geq2$) be an integer with prime factorization
\begin{equation*}
n=N^m\prod_{i=1}^t p_i^{m_i}\quad(m,m_i\geq0).
\end{equation*}
If $r_Q(1)\neq0$, then we have by Theorem \ref{main}(ii)
\begin{equation*}
r_Q(n^2)=r_Q(N^{2m}\prod_{i=1}^t p_i^{2m_i})=
\frac{r_Q(N^{2m}\prod_{i=1}^{t-1}p_i^{2m_i})r_Q(p_t^{2m_t})}{r_Q(1)}=\cdots=
\frac{r_Q(N^{2m})\prod_{i=1}^t r_Q(p_i^{2m_i})}{r_Q(1)^{t}}.
\end{equation*}
Therefore by (\ref{N}) and (\ref{notdiv}) one can get a concise
formula for $r_Q(n^2)$.
\end{itemize}
\end{remark}

Let $(k,N)\in\{(2,5),(2,13),(2,17),(3,3),(4,5),(5,3)\}$ as in
Corollary \ref{dim2}. Then for each pair $(k,N)$ one can find some
matrices to which Theorem \ref{main} and Remark \ref{formula} can be
applied. However, it doesn't seem to be known how to find such
matrices systematically. We close this section by giving a table for
these examples.

\savebox\hvOBox{%
\tiny\begin{tabular}
{c|cccccc}\hline\\
$(k,N)$ & $(2,5)$ & $(2,13)$ & $(2,17)$ & $(3,3)$ & $(4,5)$ & $(5,3)$\\\\\\
$\begin{array}{l}2k\times 2k\\
\textrm{symmetric}\\\textrm{matrix}~A\\
\textrm{with}\\
\det(A)=N
\end{array}$ &
$\begin{pmatrix}2&1&1&1\\1&2&1&1\\1&1&2&1\\1&1&1&2\end{pmatrix}$ &
$\begin{pmatrix}2&0&1&1\\0&4&0&1\\1&0&2&0\\1&1&0&2\end{pmatrix}$ &
$\begin{pmatrix}2&1&0&0\\1&2&0&1\\0&0&2&1\\0&1&1&4\end{pmatrix}$ &
$\begin{pmatrix}
2&0&0&0&0&1\\
0&2&0&0&1&0\\
0&0&2&1&0&0\\
0&0&1&2&0&1\\
0&1&0&0&2&1\\
1&0&0&1&1&2
\end{pmatrix}$
& $\begin{pmatrix}
2&1&0&0&0&0&0&0\\
1&2&1&0&0&0&0&0
\\0&1&2&1&0&0&0&0\\
0&0&1&2&2&0&0&0\\
0&0&0&2&4&1&0&0\\
0&0&0&0&1&2&1&0\\
0&0&0&0&0&1&2&1\\
0&0&0&0&0&0&1&4\end{pmatrix}$ &
$\begin{pmatrix}
2&1&0&0&0&0&0&0&0&0\\
1&2&0&0&0&0&0&0&0&0\\
0&0&2&1&0&0&0&0&0&0\\
0&0&1&2&1&0&0&0&0&0\\
0&0&0&1&2&1&0&0&0&0\\
0&0&0&0&1&2&2&0&0&0\\
0&0&0&0&0&2&4&1&0&0\\
0&0&0&0&0&0&1&2&1&0\\
0&0&0&0&0&0&0&1&2&1\\
0&0&0&0&0&0&0&0&1&2
\end{pmatrix}$
\\\\\\
eigenvalues of $A$ & $1,1,1,5$ &
$\begin{array}{l}\frac{5}{2}\pm\frac{\sqrt{9+4\sqrt{3}}}{2},\phantom{\bigg|}\\
\frac{5}{2}\pm\frac{\sqrt{9-4\sqrt{3}}}{2}
\end{array}$ &
$\begin{array}{l}
\frac{5+\sqrt{2}}{2}\pm\frac{\sqrt{7+2\sqrt{2}}}{2},\phantom{\bigg|}\\
\frac{5-\sqrt{2}}{2}\pm\frac{\sqrt{7-2\sqrt{2}}}{2}
\end{array}$ &
$1,3,2\pm\frac{\sqrt{6}}{2}\pm\frac{\sqrt{2}}{2}$ &
$\begin{array}{l} \textrm{eight (distinct) positive zeros
of}\\x^8-20x^7+162x^6 -684x^5\\+1611x^4-2092x^3+1370x^2\\-352x+5
\end{array}$
& $\begin{array}{l}1,3,\\
\textrm{eight (distinct) positive zeros of}\\
x^8-18x^7+130x^6-486x^5+1007x^4\\
-1142x^3+646x^2-140x+1
\end{array}$
\\\\\\
$\begin{array}{l} \textrm{diagonal entries}\\
\textrm{of $NA^{-1}$}
\end{array}$ &
$4,4,4,4$ & $14,4,10,12$ & $12,14,10,6$ & $6,4,4,10,10,18$ &
$12,38,78,132,50,28,12,2$ & $2,2,12,42,90,156,60,36,18,6$
\\\\\\
$\begin{array}{l}\textrm{quadratic form $Q$}\\
\textrm{associated with $A$}\end{array}$ & $\begin{array}{l} x_1^2+x_2^2+x_3^2+x_4^2\\
+x_1x_2+x_1x_3+x_1x_4\\
+x_2x_3+x_2x_4+x_3x_4
\end{array}$
& $\begin{array}{l}x_1^2+2x_2^2+x_3^2+x_4^2\\
+x_1x_3+x_1x_4+x_2x_4
\end{array}$
&
$\begin{array}{l}x_1^2+x_2^2+x_3^2+2x_4^2\\
+x_1x_2+x_2x_4+x_3x_4
\end{array}$ &
$\begin{array}{l}x_1^2+x_2^2+x_3^2+x_4^2\\+x_5^2
+x_6^2+x_1x_6+x_2x_5\\
+x_3x_4+x_4x_6+x_5x_6
\end{array}$ &
$\begin{array}{l} x_1^2+x_2^2+x_3^2+x_4^2\\
+2x_5^2+x_6^2+x_7^2+2x_8^2\\
+x_1x_2+x_2x_3+x_3x_4\\
+2x_4x_5+x_5x_6+x_6x_7+x_7x_8
\end{array}$
& $\begin{array}{l}
x_1^2+x_2^2+x_3^3+x_4^2+x_5^2+x_6^2+2x_7^2\\
+x_8^2+x_9^2+x_{10}^2+x_1x_2+x_3x_4+x_4x_5\\
+x_5x_6+2x_6x_7+x_7x_8+x_8x_9+x_9x_{10}
\end{array}$
\\\\\\
$r_Q(1)$ & $20$ & $12$ & $8$ & $72$ &
$126$ & $246$\\\\\\
$L(1-k,\chi_{(-1)^kN})$ & $-\frac{2}{5}$ & $-2$ & $-4$ &
$-\frac{2}{9}$ &
$2$ & $\frac{2}{3}$\\\\\\
$\Theta_Q(\tau)$ &
$\begin{array}{l}-5G_{2,5}(\tau)+25H_{2,5}(\tau)\\
=1+20q+30q^2+\cdots\end{array}$ &
$\begin{array}{l}-G_{2,13}(\tau)+13H_{2,13}(\tau)\\
=1+12q+14q^2+\cdots
\end{array}$&
$\begin{array}{l}-\frac{1}{2}G_{2,17}(\tau)+\frac{17}{2}H_{2,17}(\tau)\\
=1+8q+24q^2+18q^3+\cdots
\end{array}$&
$\begin{array}{l}
-9G_{3,3}(\tau)+81H_{3,3}(\tau)\\
=1+72q+270q^2+\cdots
\end{array}$ &
$\begin{array}{l}G_{4,5}(\tau)+125H_{4,5}(\tau)\\
=1+126q+868q^2+\cdots
\end{array}
$ & $\begin{array}{l} 3G_{5,3}(\tau)+243H_{5,3}(\tau)\\
=1+246 q+3600 q^2+\cdots
\end{array}$
\\\\\\\hline
\end{tabular}
}
\hvFloat[%
    floatPos=p,%
    capPos=t,%
    capWidth=1,%
    rotAngle=90,%
    objectPos=c,%
    useOBox=true%
]{table}{}{Theta functions associated with quadratic
forms\label{table}}{tab:des}

\newpage

\bibliographystyle{amsplain}

\end{document}